\def\spn{\mathop{\rm span}} \def\Hom{{\rm Hom}} \def\ind{\mathop{\rm
    ind}} \def\ad{\mathop{\rm ad}} \def\im{\mathop{\rm im}}
\def\ind{{\rm ind}}  \def\phi{\varphi}
\def\g{\mathfrak g} \def\m{\mathfrak m} \def\V{\mathfrak V}
\def\F{\mathbb F} \def\Z{\mathbb Z} \def\N{\mathbb N}
\let\@@pmod\pmod
\DeclareRobustCommand{\pmod}{\@ifstar\@pmods\@@pmod}
\def\@pmods#1{\mkern4mu({\operator@font mod}\mkern 6mu#1)}
\newtheorem{theorem}{Theorem}[section]
\newtheorem{lemma}[theorem]{Lemma}
\newtheorem{proposition}[theorem]{Proposition}
\newtheorem*{remark}{{\bf Remark}}
\providecommand{\keywords}[1]{\noindent{Keywords:} #1}
\providecommand{\classify}[1]{\noindent{Mathematics Subject
    Classification:} #1}
\renewcommand{\footnotesize}{\scriptsize}
\title{Restricted One-dimensional Central Extensions of the Restricted
  Filiform Lie Algebras $\m_0^\lambda(p)$}
\author{Tyler J. Evans \\Department of Mathematics \\Humboldt State
  University \\Arcata, CA 95521 USA \\evans@humboldt.edu \and Alice
  Fialowski\footnote{The final version of the paper was written during
    the stay of the second author at the Max-Planck Institute for
    Mathematics Bonn, Germany.} \\Institute of Mathematics
  \\University of P\'ecs and E\" otv\" os Lor\' and University \\
  Hungary\\ fialowsk@ttk.pte.hu, fialowsk@cs.elte.hu}
\date{}
\begin{document}

\maketitle

\keywords{restricted Lie algebra; central extension; cohomology;
  filiform Lie algebra}

\classify{17B50, 17B56}

\begin{abstract}
  Consider the filiform Lie algebra $\m_0$ with nonzero Lie brackets
  $[e_1,e_i]=e_{i+1}$ for $1<i<p$, where the characteristic of the
  field $\F$ is $p > 0$. We show that there is a family
  $\m_0^{\lambda}(p)$ of restricted Lie algebra structures
  parameterized by elements $\lambda \in \F^p$. We explicitly describe
  both the ordinary and restricted $1-$cohomology spaces and show that
  for $p \ge3$ these spaces are equal. We also describe the ordinary
  and restricted $2-$cohomology spaces and interpret our results in the
  context of one-dimensional central extensions.
\end{abstract}

\section{Introduction} In the past years a lot of attention has been
paid to nilpotent $\N$-graded Lie algebras of maximal class. By
$\N$-graded we mean that the Lie algebra is the direct sum of
subspaces $\g_i$, $i \in \N$ such that $[\g_i,\g_j] \subset
\g_{i+j}$. A Lie algebra of \emph{maximal class} is a graded Lie
algebra
$$
\g=\oplus_{i=1}^{\infty}\g_i
$$
over a field $\F$, where $\dim(\g_1)=\dim(\g_2)=1$, dim$(\g_i) \leq 1$
for $i \geq 3$ and $[\g_i,\g_1]= \g_{i+1}$ for $i \geq 1$.  Algebras
of maximal class are either finite dimensional, or all their proper
factors are finite dimensional. These infinite dimensional algebras
can be viewed as (projective) limits of nilpotent Lie algebras of
maximal class. They are also called \emph{narrow, thin}, or
\emph{coclass 1} Lie algebras.

If the number of generators of such Lie algebra is the minimal $2$,
then all such Lie algebras are classified in characteristic $0$.  In
fact, all $\N$-graded infinite dimensional Lie algebras with two
generators $e_1$ and $e_2$ have been classified in \cite{Fi}, and
exactly 3 of those are of \emph{filiform} type. An $n$-dimensional
nilpotent Lie algebra is filiform, if dim$\g^2=n-2$, ...,
dim$\g^k=n-k$, ..., dim$\g^n=0$ where $\g^k=[\g,\g^{k-1}]$,
$2 \leq k \leq n$.  We also call their projective limit filiform
type. We list them with the nontrivial bracket structures:

\begin{align*}
  \m_0:& \quad [e_1 ,e_i]= e_{i+1},\qquad  i \ge 2, \quad i \in \N\\
  \m_2:& \quad [e_1, e_i]= e_{i+1}, \qquad i \ge 2,\quad  [e_2, e_j] = e_{j+2},
         \quad  j \ge 3, \quad i,j \in \N\\
  \V:& \quad    [e_i, e_j] = (j-i)e_{i+j}, \qquad i, j \geq 1.
\end{align*} 

In the finite-dimensional case in characteristic zero, the
classification of finite-dimensional $\N$-graded filiform Lie algebras
is also done in \cite{Mill}. They include the natural ``truncations''
of the above three algebras $\m_0(n), \m_2(n)$ and $\V(n)$, which are
obtained by taking the quotient by the ideal generated by $e_{n+1}$,
but there are other families as well.

The picture is more complicated in positive characteristic, see
\cite{CMN, CN, J}, but $\m_0$, $\m_2$ and their truncations always
show up.

The cohomology of $\N$-graded Lie algebras of maximal class has been
studied extensively over a field of characteristic zero (\cite{Fi,
  FiMi, Ve}), but for positive characteristic, much less is
known. Recently it was shown that over a field of characteristic two,
the algebras $\m_0(n)$ and $\m_2(n)$ have the same Betti numbers
\cite{Ts}, and the cohomology spaces with trivial coefficients are
obtained in this case. In fact, these cohomology spaces are isomorphic
\cite{NiTs}. For the truncated finite dimensional analogues, the first
3 cohomology spaces are known over $\Z_2$.

In this paper we show if the field $\F$ has characteristic $p>0$, the
Lie algebra $\m_0(p)$ admits the structure of a restricted Lie
algebra. In fact, we show that there is a family $\m_0^\lambda(p)$ of
such restricted Lie algebra structures parameterized by elements
$\lambda\in\F^p$. Using the ordinary Chevalley-Eilenberg complex and
the partial restricted complex in \cite{EvansFuchs2}, we calculate the
dimensions of both the ordinary cohomology $H^q(\m_0^\lambda(p))$ and
the restricted cohomology $H^q_*(\m_0^\lambda(p))$ for $q=1,2$, and we
explicitly describe bases for these spaces.

The organization of the paper is as follows. Section 2 contains the
constructions of the Lie algebras and restricted Lie algebras under
consideration including explicit descriptions of the Lie brackets,
$[p]$-operators and both ordinary and restricted cochain spaces and
differentials. Section 3 contains the computation of both the ordinary
cohomology $H^1(\m_0^\lambda(p))$ and restricted cohomology
$H^1_*(\m_0^\lambda(p))$, and in particular it is shown that these
spaces are equal for primes $p\ge 3$. Section 4 contains the
computations of $H^2(\m_0^\lambda(p))$ and
$H^2_*(\m_0^\lambda(p))$. In Section 5, we interpret our results in
the context of one-dimensional (both ordinary and restricted) central
extensions.

\paragraph {\bf Acknowledgements.}
The authors are grateful for helpful conversations with Dmitry
B. Fuchs, and for useful suggestions from the referee.

\section{Preliminaries}

\subsection{The Lie Algebra $\m_0(p)$ over $\F$}

Let $p > 0$ be a prime, and let $\F$ be a field of characteristic
$p$. Define the $\F$-vector space
\[\m_0(p)=\spn\nolimits_\F(\{e_1,\dots , e_p\}), \]
and define a bracket on $\m_0(p)$ by
\[[e_1,e_i]=e_{i+1},\quad 1<i < p,\] with all other brackets
$[e_i,e_j]=0$.  Note that $\m_0(p)$ is a graded Lie algebra with $k$th
graded component $(\m_0(p))_k=\F e_k$ for $1\le k\le p$.  If
$\alpha_i, \beta_i \in \F$ and $g=\sum_{i=1}^p \alpha_ie_i$,
$h=\sum_{i=1}^p \beta_ie_i$, then
\begin{eqnarray}\label{bracket} [g,h]=\sum_{j=3}^p
  (\alpha_1\beta_{j-1}-\alpha_{j-1}\beta_1) e_j.
\end{eqnarray}

\subsection{The Restricted Lie Algebras $\m_0^\lambda(p)$}

For any $j\geq 2$ and $g_1,\dots, g_j\in\m_0(p)$, we write the
$j$-fold bracket
\[[g_1,g_2,g_3,\dots, g_j]=[[\dots[[g_1,g_2],g_3],\dots,]g_j].\]
Equation (1) implies that the center of the algebra is
$Z(\m_0(p))= \F e_p,$ and that any $p$-fold bracket
$[g_1,g_2,g_3,\dots, g_p]=0.$ In particular, equation (1) implies
$(\ad g)^p=0$ for all $g\in\m_0(p)$. Therefore $(\ad e_k)^p =0$ is an
inner derivation for all $k$ so that $\m_0(p)$ admits the structure of
a restricted Lie algebra (see \cite{Jac}). To define a $[p]$-operator
on $\m_0(p)$, we choose for each $1\le k\le p$, an element $e_k^{[p]}$
such that
\[\ad e_k^{[p]} = (\ad e_k)^p=0.\]
That is, we must choose $e_k^{[p]}$ in the center $\F e_p$ of the
algebra $\m_0(p)$. If we let
$\lambda=(\lambda_1, ..., \lambda_p)\in\F^p$, then setting
$e_k^{[p]}=\lambda_k e_p$ for each $k$ defines a restricted Lie
algebra that we denote by $\m_0^\lambda(p)$.  Because $p$-fold
brackets are zero, if $[p]$ is any restricted Lie algebra operator on
$\m_0(p)$, then for all $g,h\in\m_0(p)$, $\alpha\in\F$,
\[(g+h)^{[p]}=g^{[p]}+h^{[p]}\ \ \mbox{\rm and}\ \ (\alpha
  g)^{[p]}=\alpha^p g^{[p]}.\] It follows that if
${\bf \lambda}\in\F^p$, then for all
$g=\sum \alpha_k e_k\in\m_0^\lambda(p)$,
\begin{eqnarray}\label{p-op} g^{[p]}=\left (\sum_{k=1}^p
  \alpha_k^p\lambda_k \right ) e_p.
\end{eqnarray}
Everywhere below, we write $\m_0^\lambda(p)$ to denote both the Lie
algebra $\m_0(p)$ and the restricted Lie algebra $\m_0^\lambda(p)$ for
a given $\lambda\in\F^p$.

A natural question arises: For which $\lambda,\lambda'\in\F^p$ are the
graded restricted Lie algebras $\m_0^\lambda(p)$ and
$\m_0^{\lambda'}(p)$ isomorphic?

\begin{proposition}
  If $\lambda,\lambda'\in\F^p$ , the graded restricted Lie algebras
  $\m_0^{\lambda}(p)$ and $\m_0^{\lambda'}(p)$ are isomorphic if and
  only if $\lambda_1=\mu_1\lambda'_1$ and $\lambda_2=\mu_2\lambda_2'$
  where $\mu_1, \mu_2 \in \F$ are independent parameters, and
  \[
    \lambda_k=\mu_2^{p-1}\mu_1^{p(k-3)+2}\lambda'_k,
  \]
  for $k \in 3,...,p$.

\end{proposition}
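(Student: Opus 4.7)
The plan is to describe every graded isomorphism $\phi: \m_0^\lambda(p) \to \m_0^{\lambda'}(p)$ explicitly by reading off the constraints imposed by the bracket and the $[p]$-operator. Because each homogeneous component $(\m_0(p))_k = \F e_k$ is one-dimensional, any such $\phi$ must act diagonally on the basis: $\phi(e_k) = \nu_k e_k$ with $\nu_k \in \F^{\times}$. Feeding $\phi$ through the bracket relations $[e_1, e_i] = e_{i+1}$ for $2 \le i < p$ yields the recursion $\nu_{i+1} = \nu_1 \nu_i$, so induction gives $\nu_k = \nu_1^{k-2}\nu_2$ for all $k \ge 2$. The underlying graded Lie algebra isomorphism is thus parameterized by exactly two free scalars, which I identify with $\mu_1 := \nu_1$ and $\mu_2 := \nu_2$; in particular $\nu_p = \mu_1^{p-2}\mu_2$.

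Next I would impose the restricted condition $\phi(e_k^{[p]}) = \phi(e_k)^{[p]}$. Using equation (\ref{p-op}), both sides are scalar multiples of $e_p$: the left equals $\lambda_k \nu_p e_p$, and the right equals $\nu_k^p \lambda'_k e_p$ (applying the target $[p]$-operator to $\nu_k e_k$ via $(\alpha x)^{[p]} = \alpha^p x^{[p]}$). Comparing coefficients of $e_p$ produces the scalar system $\lambda_k \nu_p = \nu_k^p \lambda'_k$ for $1 \le k \le p$. Substituting the previous formulas and simplifying yields, for $k \ge 3$, the monomial relation $\lambda_k = \mu_1^{p(k-3)+2}\mu_2^{p-1}\lambda'_k$ (matching the displayed formula); the cases $k = 1, 2$ must be analyzed separately to obtain the stated relations $\lambda_1 = \mu_1 \lambda'_1$ and $\lambda_2 = \mu_2 \lambda'_2$.

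For the converse, given $\mu_1, \mu_2 \in \F^{\times}$ satisfying the listed conditions, I would define $\phi(e_1) = \mu_1 e_1$ and $\phi(e_k) = \mu_1^{k-2}\mu_2 e_k$ for $k \ge 2$, and verify that both structures are preserved via the same computations in reverse. The main obstacle I expect is the careful bookkeeping of exponents: in particular, the formula $\nu_k = \mu_1^{k-2}\mu_2$ valid for $k \ge 2$ does not extend to $k = 1$, so the $k = 1$ and $k = 2$ constraints imposed by the $[p]$-operator must be handled as separate special cases and reconciled with the uniform $k \ge 3$ pattern.
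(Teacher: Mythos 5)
Your proposal is correct and follows essentially the same route as the paper: a graded isomorphism must act diagonally, the bracket forces $\phi(e_k)=\mu_2\mu_1^{k-2}e_k$ for $k\ge 2$, and comparing $\phi(e_k^{[p]})$ with $\phi(e_k)^{[p]'}$ yields $\lambda_k\mu_p=\mu_k^p\lambda_k'$ and hence the displayed exponent formula, with the converse obtained by reversing the computation. The only point worth noting is that the $k=1,2$ cases you defer actually give $\lambda_1=\mu_1^2\mu_2^{-1}\lambda_1'$ and $\lambda_2=\mu_2^{p-1}\mu_1^{2-p}\lambda_2'$ rather than literally $\mu_1\lambda_1'$ and $\mu_2\lambda_2'$, a reparameterization issue that resides in the proposition's statement itself and is likewise left implicit in the paper's proof.
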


\begin{proof}
  Assume that there exists a graded restricted Lie algebra isomorphism
  $\phi: \m_0^{\lambda}(p) \to \m_0^{\lambda'}(p)$, and let
  $\phi(e_1)=\mu_1 e_1, \phi(e_2)=\mu_2 e_2$ for some
  $\mu_1, \mu_2 \in \F$. Since $\phi$ preserves the Lie bracket, we
  must have $\phi(e_k)=\mu_2\mu_1^{k-2} e_k$, $k=3,\dots, p$. Let
  $\mu_k=\mu_2\mu_1^{k-2}$ for $3\le k\le p$ so that
  $\phi(e_k)=\mu_ke_k$ for all $k$.  Moreover, $\phi$ preserves the
  restricted $[p]$-structure so that
  \[
    \phi(e_k^{[p]})=\phi(e_k)^{[p]'}
  \]
  for all $k$ (here $[p]'$ denotes the restricted $[p]$-structure on
  $\m_0^{\lambda'}(p)$).  Now,
  \[
    \phi(e_k^{[p]})=\phi(\lambda_k e_p)=\lambda_k\mu_p e_p\ \mbox{\rm
      and}\ \phi(e_k)^{[p]'}=(\mu_k e_k)^{[p]'}=\mu_k^p\lambda_k'e_p
  \]
  so that $\lambda_k\mu_p=\mu_k^p\lambda'_k,$ and hence
  $\lambda_k=\mu_k^p\mu_p^{-1}\lambda'_k$.  But
  $\mu_k=\mu_2 \mu_1^{k-2}$, so that
  \[
    \lambda_k=\mu_2^p\mu_1^{p(k-2)}\mu_2^{-1}\mu_1^{2-p}\lambda'_k =
    \mu_2^{p-1}\mu_1^{p(k-3)+2}\lambda'_k.
  \]
  It remains to show that the above condition on the $\lambda_k$ gives
  rise to a graded restricted Lie algebra isomorphism between
  $\m_0^{\lambda}(p)$ and $\m_0^{\lambda'}(p)$. If, for
  $\mu_1,\mu_2\in\F$, we define
  $\phi(e_1)=\mu_1e_1, \phi(e_2)=\mu_2e_2$ and
  $\phi(e_k)=\mu_2\mu_1^{k-2}e_k\ (3\le k\le p)$, it is easy to check
  the argument above is reversible, and we obtain a graded isomorphism
  between the restricted Lie algebras.

\end{proof}

\subsection{Cochain Complexes with Trivial Coefficients}

\subsubsection{Ordinary Cochain Complex}

For ordinary Lie algebra cohomology with trivial coefficients, the
relevant cochain spaces (with bases) are:
\begin{align*}
  C^0(\m_0^\lambda(p))&= \F, & \{1\};\\
  C^1(\m_0^\lambda(p))&= \m_0^\lambda(p)^*, & \{e^k\ |\ 1\le k\le p\};\\
  C^2(\m_0^\lambda(p))&= (\wedge^2\m_0^\lambda(p))^*,& \{e^{i,j}\ |\ 1\le i<j\le p\};\\
  C^3(\m_0^\lambda(p))&= (\wedge^3\m_0^\lambda(p))^*, & \{e^{s,t,u}\ |\ 1\le s<t<u\le p\},\\
\end{align*}
and the differentials are defined by:
\begin{align*}
  d^0: C^0(\m_0^\lambda(p))\to C^1(\m_0^\lambda(p))  &\ \ \ \  d^0=0;\\
  d^1:C^1(\m_0^\lambda(p))\to C^2(\m_0^\lambda(p)) &\ \ \ \  d^1(\psi)(g,h)=\psi([g,h]);\\
  d^2:C^2(\m_0^\lambda(p))\to C^3(\m_0^\lambda(p)) &\ \ \ \  d^2(\phi)(g,h,f)=\phi([g,h]\wedge f)-\phi([g,f]\wedge h)+\phi([h,f]\wedge g).\\
\end{align*}
The cochain spaces $C^n(\m_0^\lambda(p))$ are graded:
\begin{align*}
  C^1_k(\m_0^\lambda(p))&=\spn(\{e^k\}),& 1\le k\le p;\\
  C^2_k(\m_0^\lambda(p))&=\spn(\{e^{i,j}\}),&  1\le i<j \le p, i+j=k, 3\le k\le 2p-1;\\
  C^3_k(\m_0^\lambda(p))&=\spn(\{e^{s,t,u}\}),&  1\le s<t<u \le p, s+t+u=k, 6\le k\le 3p-3,\\
\end{align*}
and the differentials are graded maps. For $1\le k\le p$, if we write
\[d^1(e^k)=\sum_{1\le i<j\le p} A^k_{ij}e^{i,j},\] then for
$1\le q <r\le p$, we have
\[A^k_{qr}=d^1(e^k)(e_{qr})=e^k[e_q,e_r]=\left\{
    \begin{array}{ll}
      0& k=1,2;\\
      1 & q=1, r=k-1, k\ge 3\\
      0 & \mbox{\rm otherwise.}
    \end{array}\right.\]
Therefore $d^1(e^1)=d^1(e^2)=0$ and for $3\le k\le p$,
\begin{eqnarray}\label{1-coboundary}
  d^1(e^k)=e^{1,k-1}.
\end{eqnarray}
For $1\le i<j\le p$, if we write
\[d^2(e^{i,j})=\sum_{1\le s<t<u\le p} A^{ij}_{stu}e^{s,t,u}.\] Then
for $1\le l<m<n\le p$, we have
\begin{align}
  \begin{split}
    \label{eq:3}
    A^{ij}_{lmn} &= d^2(e^{i,j})(e_{lmn})\\
    &=e^{i,j}([e_l,e_m]\wedge e_n)-e^{i,j}([e_l,e_n]\wedge
    e_m)+e^{i,j}([e_m,e_n]\wedge e_l).
  \end{split}
\end{align}
Therefore (\ref{eq:3}) shows $d^2(e^{1,j})=0$ for $2\le j\le p$, and
for $i\ge 2$,
\begin{eqnarray}
  \label{2-coboundary}
  d^2(e^{i,j})=e^{1,i-1,j}+e^{1,i,j-i}.
\end{eqnarray}

\subsubsection{Restricted Cochain Complex}

For convenience, we include a brief description of the (partial)
restricted cochain complex employed below to compute the restricted
Lie algebra cohomology with trivial coefficients.  We refer the reader
to \cite{EvansFuchs2} or \cite{EvFiPe} for a detailed description of
this (partial) complex. The first two restricted cochain spaces
coincide with the ordinary cochain spaces:
\begin{align*}
  C^0_*(\m_0^\lambda(p))&= C^0(\m_0^\lambda(p))= \F\\
  C^1_*(\m_0^\lambda(p))&=   C^1(\m_0^\lambda(p))=\m_0^\lambda(p)^*.
\end{align*}
Using the same notation as in \cite{Viv}, we let $\Hom_{\rm Fr}(V,W)$
denote the set of {\it Frobenius homomorphisms} from the $\F$-vector
space $V$ to the $\F$-vector space $W$. That is
\[ \Hom_{\rm Fr}(V,W) = \{ f:V\to W\ |\ f(\alpha x+ \beta y) =
  \alpha^p f(x) + \beta^p f(y)\}\] for all $\alpha,\beta\in\F$ and
$x,y\in V$.

If $\phi\in C^2(\m_0^\lambda(p))$ and
$\omega\in\Hom_{\rm Fr}(\m_0^\lambda(p),\F)$, then we say $\omega$ has
the {\bf $*$-property} with respect to $\phi$ if for all
$g,h\in \m_0^\lambda(p)$ we have
\begin{equation}
  \label{starprop}
  \omega (g+h)=\omega (g)+ \omega (h) + \sum_{\substack{g_i=g\ {\rm
        or}\ h\\ g_1=g,g_2=h}} \frac{1}{\#(g)}
  \phi([g_1,g_2,\dots, g_{p-1}]\wedge g_p).
\end{equation} Here $\#(g)$ is the number
of factors $g_i$ equal to $g$. We remark that $\omega$ has the
$*$-property with respect to $\phi=0$ precisely when
$\omega\in\Hom_{\rm Fr}(\m_0^\lambda(p),\F)$. Moreover, given $\phi$, we can assign the
values of $\omega$ arbitrarily on a basis for $\m_0^\lambda(p)$ and use
(\ref{starprop}) to define $\omega\in\Hom_{\rm
  Fr}(\m_0^\lambda(p),\F)$ that has the $*$-property with respect to
$\phi$. We then define the space of restricted $2$-cochains as
\begin{align*}
  C^2_*(\m_0^\lambda(p))=\{(\phi,\omega)\ |\ &\phi\in C^2(\m_0^\lambda(p)), \omega\in\Hom_{\rm Fr}(\m_0^\lambda(p),F)\\
                                             & \mbox{\rm has the $*$-property with respect to $\phi$}\}.
\end{align*}
If $\alpha\in C^3(\m_0^\lambda(p))$ and
$\beta\in\m_0^\lambda(p)^*\otimes\Hom_{\rm Fr}(\m_0^\lambda(p),\F)$,
we say that $\beta$ has the {\bf $**$-property with respect to
  $\alpha$,} if for all $g,h_1,h_2\in\m_0^\lambda(p)$
\begin{align}\label{starstarprop}
  \beta(g,h_1+h_2) &=
                     \beta(g,h_1)+\beta(g,h_2)-\nonumber \\
                   & \sum_{\substack{l_1,\dots,l_p=1 {\rm or} 2\\ l_1=1,
  l_2=2}}\frac{1}{\#\{i_i=1\}}\alpha (g\wedge
  [h_{l_1},\cdots,h_{l_{p-1}}]\wedge h_{l_{p}}).
\end{align}
Again we remark that $\beta$ has the $**$-property with respect to
$\alpha=0$ precisely when
$\beta\in\m_0^\lambda(p)^*\otimes\Hom_{\rm
  Fr}(\m_0^\lambda(p),\F)$. Given $\alpha$, we can define the values
of $\beta$ arbitrarily on a basis and use (\ref{starstarprop}) to
define
$\beta\in\m_0^\lambda(p)^*\otimes\Hom_{\rm Fr}(\m_0^\lambda(p),\F)$
that has the $**$-property with respect to $\alpha$. We then define
the space of restricted $3$-cochains by
\begin{align*}
  C^3_*(\m_0^\lambda(p))=\{(\alpha,\beta)\ |\ &\alpha\in C^3(\m_0^\lambda(p)), \beta\in\m_0^\lambda(p)^*\otimes\Hom_{\rm Fr}(\m_0^\lambda(p),\F)\\ & \mbox{\rm has the $**$-property w.r.t. $\alpha$}\}.
\end{align*}
We will use the following bases for the restricted cochains:
\begin{align*}
  C^0_*(\m_0^\lambda(p))& & \{1\};\\
  C^1_*(\m_0^\lambda(p))& & \{e^k\ |\ 1\le k\le p\};\\
  C^2_*(\m_0^\lambda(p))& & \{(e^{i,j},\tilde e^{i,j})\ |\ 1\le i<j\le p\}\cup
                            \{(0,\overline e^k)\ |\ 1\le k\le p\} ;
\end{align*}
where $\overline e^k:\m_0^\lambda(p)\to\F$ is defined by
\[\overline e^k\left (\sum_{i=1}^p \alpha_ie_i\right ) = \alpha_k^p,\]
and $\tilde e^{i,j}$ is the map $\tilde e^{i,j}:\m_0^\lambda(p)\to \F$
that vanishes on the basis and has the $*$-property with respect to
$e^{i,j}$. More generally, given $\phi\in C^2(\m_0^\lambda (p))$, we
let $\tilde\phi: \m_0^\lambda (p) \to\F$ be the map that vanishes on
the basis for $\m_0^\lambda (p) $ and has the $*$-property with
respect to $\phi$. The restricted differentials are defined by
\begin{align*}
  d^0_*: C^0_*(\m_0^\lambda(p))\to C^1_*(\m_0^\lambda(p))  &\ \ \ d^0_*=0\\
  d^1_*:C^1_*(\m_0^\lambda(p))\to C^2_*(\m_0^\lambda(p)) &\ \ \
                                                           d^1_*(\psi)=(d^1(\psi),\ind^1(\psi))\\
  d^2_*:C^2_*(\m_0^\lambda(p))\to C^3_*(\m_0^\lambda(p)) &\ \ \ d^2_*(\phi,\omega)=(d^2(\phi),\ind^2(\phi,\omega))
\end{align*}
where $\ind^1(\psi)(g) := \psi(g^{[p]})$ and
$\ind^2(\phi,\omega)(g,h) := \phi(g\wedge h^{[p]})$.

If $\psi\in C^1_*(\m_0^\lambda(p))$ and
$(\phi,\omega)\in C^2_*(\m_0^\lambda(p))$, then $\ind^1(\psi)$ has the
$*$-property with respect to $d^1(\psi)$ and $\ind^2(\phi,\omega)$ has
the $**$-property with respect to $d^2(\phi)$ \cite{EvansFuchs2}.  If
$g=\sum\alpha_i e_i$, $h=\sum\beta_i e_i$, $\psi=\sum \mu_ie^i$ and
$\phi=\sum \sigma_{ij}e^{i,j}$, then

\begin{eqnarray}\label{induced1}
  \ind^1(\psi)(g) = \mu_p \left (\sum_{j=1}^p
  \alpha_j^p\lambda_j \right )
\end{eqnarray}
and
\begin{eqnarray}\label{induced2}
  \ind^2(\phi,\omega)(g,h) =  \left (\sum_{i=1}^p
  \beta_i^p\lambda_i \right )\left (\sum_{j=1}^{p-1} \alpha_j\sigma_{jp}\right).
\end{eqnarray}

\begin{remark}
  For a given $\phi\in C^2(\m_0^\lambda(p))$, if
  $(\phi,\omega),(\phi,\omega')\in C^2_*(\m_0^\lambda(p))$, then
  $d^2_*(\phi,\omega)=d^2_*(\phi,\omega')$. In particular, with
  trivial coefficients, $\ind^2(\phi,\omega)$ depends only on $\phi$.
\end{remark}

\section{The Cohomology $H^1(\m_0^\lambda(p))$ and
  $H^1_*(\m_0^\lambda(p))$}

\begin{theorem}\label{1-coho}
  If $p\ge 3$ and ${\bf \lambda} \in \F^p$,
  then \[H^1(\m_0^\lambda(p))=H^1_*(\m_0^\lambda(p))\] and the classes
  of $\{e^1,e^2\}$ form a basis.
\end{theorem}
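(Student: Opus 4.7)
The plan is to compute both cohomology spaces directly from the definitions of the differentials given in the Preliminaries, and then compare.

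First, I would handle the ordinary cohomology. Since $d^0 = 0$, we have $H^1(\m_0^\lambda(p)) = \ker(d^1)$. By equation (\ref{1-coboundary}), $d^1$ acts on the basis $\{e^1,\dots,e^p\}$ of $C^1(\m_0^\lambda(p))$ by $d^1(e^1) = d^1(e^2) = 0$ and $d^1(e^k) = e^{1,k-1}$ for $3 \le k \le p$. The images $e^{1,2}, e^{1,3}, \dots, e^{1,p-1}$ are linearly independent elements of $C^2(\m_0^\lambda(p))$, so a cochain $\psi = \sum_{k=1}^p \mu_k e^k$ lies in $\ker(d^1)$ if and only if $\mu_3 = \mu_4 = \cdots = \mu_p = 0$. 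Hence $H^1(\m_0^\lambda(p)) = \spn\{e^1, e^2\}$.

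Next I would handle the restricted cohomology. Since $d^0_* = 0$ and $C^1_*(\m_0^\lambda(p)) = C^1(\m_0^\lambda(p))$, we have $H^1_*(\m_0^\lambda(p)) = \ker(d^1_*)$, where $d^1_*(\psi) = (d^1(\psi), \ind^1(\psi))$. Thus $\psi \in \ker(d^1_*)$ iff $\psi \in \ker(d^1)$ \emph{and} $\ind^1(\psi) = 0$. By the previous paragraph, the first condition forces $\psi = \mu_1 e^1 + \mu_2 e^2$. For such $\psi$, equation (\ref{induced1}) gives $\ind^1(\psi)(g) = \mu_p\bigl(\sum_j \alpha_j^p \lambda_j\bigr)$, and since $p \ge 3$ the coefficient $\mu_p$ of $e^p$ in $\psi$ is automatically zero. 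Hence $\ind^1(\psi) = 0$ is forced, and $\ker(d^1_*) = \spn\{e^1, e^2\}$ as well.

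Combining the two computations yields $H^1(\m_0^\lambda(p)) = H^1_*(\m_0^\lambda(p)) = \spn\{e^1, e^2\}$, with the classes of $\{e^1, e^2\}$ as a basis. There is no serious obstacle here; the only point worth emphasizing is the role of the hypothesis $p \ge 3$, which is exactly what guarantees that $e^p$ is not among the cocycles $\{e^1, e^2\}$ so that the restricted obstruction $\ind^1$ vanishes independently of $\lambda$. For $p = 2$, the space $\spn\{e^1, e^2\}$ contains $e^p$ and the restricted condition imposes a genuine $\lambda$-dependent constraint, which is why the theorem excludes that case.
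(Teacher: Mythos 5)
Your proposal is correct and follows essentially the same route as the paper: identify $\ker d^1 = \spn\{e^1,e^2\}$ from equation (\ref{1-coboundary}), then observe that any ordinary $1$-cocycle has $\mu_p=0$ when $p\ge 3$, so $\ind^1(\psi)=0$ automatically and the restricted and ordinary cohomologies coincide. Your closing remark about why $p=2$ fails also matches the discussion the paper gives immediately after its proof.
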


\begin{proof}
  It follows easily from (\ref{1-coboundary}) that $\dim(\ker d^1)=2$
  and $\{e^1,e^2\} $ is a basis for this kernel. Moreover $d^0=0$, so
  that
  \[H^1(\m_0^\lambda (p))\cong\ker d^1=\F e^1\oplus \F e^2.\]

  Now, $H^1_*(\m_0 ^\lambda (p))$ consists of those ordinary
  cohomology classes $[\psi]\in H^1 (\m_0 ^\lambda (p))$ for which
  $\ind^1(\psi)=0$ \cite{EvansFuchs2}. If
  $\psi =\sum_{k=1}^p \mu_ke^k$ is any ordinary cocycle, then
  $\mu_p=0$ ($p\ge 3$) so that for any $g\in \m_0^\lambda (p)$, we
  have
  \[\ind^1(\psi)(g)=\psi(g^{[p]})=\mu_p\left (\sum_{k=1}^p
      \alpha_k^p\lambda_k\right )=0\] and hence
  $H^1_*(\m_0^\lambda(p))=H^1(\m_0(p))$.
\end{proof}

\begin{remark}
  For $p\ge 3$, formula (\ref{1-coboundary}) shows that for
  $3\le k\le p$, $d^1(e_k)=e^{1,k-1}$, so the set
  $\{e^{1,2},\dots, e^{1,p-1}\}$ is a basis for the image
  $d^1(C^1(\m_0^\lambda(p))$.
\end{remark}

The above calculation of the ordinary cohomology
$H^1(\m_0^\lambda(p))$ is valid also in the case $p=2$, but the
restricted cohomology depends on $\lambda$ in this case. In
particular, if $p=2$, and $\lambda=(0,0)$, then $\ind^1(\psi)(g)=0$
for all $g\in\m_0$ and hence
$H^1_*(\m_0^\lambda(2))=H^1(\m_0^\lambda(2))$. If $\lambda\ne(0,0)$,
then $\ind^1(\psi)(g)=0$ for all $g\in\m_0$ if and only if $\mu_2=0$
so that $\ker d^1_*=\{e^1\}$ and $H^1_*(\m_0^\lambda(2))$ is one
dimensional.

\section{The Cohomology $H^2(\m_0^\lambda(p))$ and
  $H^2_*(\m_0^\lambda(p))$}

\subsection{Ordinary Cohomology}
\begin{theorem}\label{ordinary2}
  If $p=2$,
  $H^2(\m_0^\lambda(2))\cong C^2(\m_0^\lambda(2))=\spn(\{e^{1,2}\})$
  is 1-dimensional.
  
  If $p\ge 3$, then
  \[\dim(H^2(\m_0^\lambda(p)))=\frac{p+1}{2}\] and the
  cohomology classes of the cocycles
  $\{e^{1,p},\phi_5,\phi_7,\phi_9,\dots, \phi_{p+2}\}$ form a basis,
  where
  \[\phi_k=e^{2, k-2}-e^{3, k-3}+\cdots +(-1)^{\lfloor
      \frac{k}{2}\rfloor} e^{\lfloor \frac{k}{2}\rfloor, k-\lfloor
      \frac{k}{2}\rfloor}.\]
\end{theorem}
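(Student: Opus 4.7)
The plan is to exploit the internal grading on $C^\bullet(\m_0^\lambda(p))$: since both $d^1$ and $d^2$ are graded maps of degree $0$, the cohomology decomposes as $H^2(\m_0^\lambda(p))=\bigoplus_k H^2_k(\m_0^\lambda(p))$, so it suffices to compute each graded piece separately. The case $p=2$ is immediate: $C^2(\m_0^\lambda(2))=\spn(\{e^{1,2}\})$ is one-dimensional, formula (4) gives $d^2(e^{1,2})=0$, and formula (3) gives $d^1(e^1)=d^1(e^2)=0$, so $H^2=C^2$.

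For $p\ge 3$, I split the analysis into two parts. First, the cocycles $e^{1,j}$: formula (4) shows that every $e^{1,j}$ with $2\le j\le p$ is a cocycle, while formula (3) shows that $e^{1,j}=d^1(e^{j+1})$ is a coboundary for $2\le j\le p-1$. Since $e^{p+1}\notin C^1(\m_0^\lambda(p))$, the class of $e^{1,p}$ (in grade $p+1$) is nonzero and contributes one generator. Second, for each fixed grade $k$ with $3\le k\le 2p-1$, I restrict $d^2$ to the subspace $V_k=\spn(e^{i,k-i}\ |\ 2\le i<k-i,\ k-i\le p)$. Under the correct reading of formula (4), the map takes a generator $e^{i,k-i}$ to $e^{1,i-1,k-i}+e^{1,i,k-i-1}$, with summands involving a repeated index set to zero. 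With respect to the natural bases, the matrix of $d^2|_{V_k}$ is bidiagonal with all nonzero entries equal to $1$.

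A direct rank count then shows that $V_k\cap\ker d^2$ is one-dimensional exactly when $k$ is odd with $5\le k\le p+2$, and in that case solving the recurrence $c_{i+1}=-c_i$ coming from the bidiagonal structure produces (up to a scalar) the cocycle $\phi_k$ of the statement; for even $k$, or for odd $k\ge p+4$, the matrix is square or has one fewer column than row with full column rank, so $V_k\cap\ker d^2=0$. Since $\im d^1\cap C^2_k\subseteq\spn(e^{1,k-1})$ has no component on $V_k$, each $\phi_k$ represents a nonzero class. The classes $\{e^{1,p},\phi_5,\phi_7,\ldots,\phi_{p+2}\}$ live in pairwise distinct grades (grade $p+1$ is even while the $\phi_k$'s are in odd grades), hence they are linearly independent in $H^2$. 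Counting: the odd integers in $[5,p+2]$ are $5,7,\ldots,p+2$ (using that $p$ is odd), which number $(p-1)/2$; together with the class of $e^{1,p}$ this gives the claimed dimension $(p+1)/2$.

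The main obstacle is the careful bookkeeping required to track the boundary behavior of the indices: when $i-1=1$, when $k-i-1=i$, or when $i$ hits the bound $k-p$, both the size of $V_k$ and the structure of the matrix of $d^2|_{V_k}$ change, and these boundary cases behave differently according to the parity of $k$ and whether $k\le p+1$ or $k\ge p+2$. Once the case analysis is set up, however, the kernel computation itself reduces to the elementary fact that a bidiagonal matrix of $1$'s is of full rank when square (yielding no cocycles) and has a one-dimensional kernel when it has exactly one more column than row (yielding $\phi_k$).
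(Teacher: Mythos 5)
Your proof is correct and follows essentially the same route as the paper --- an explicit computation of $\ker d^2$ and $\im d^1$ from the coboundary formulas --- though you organize the kernel computation by internal grade and make the bidiagonal rank count explicit, where the paper only sketches the argument. You are also right that the coboundary formula should read $d^2(e^{i,j})=e^{1,i-1,j}+e^{1,i,j-1}$ (the printed $e^{1,i,j-i}$ is a typo, as the grading already forces); with that correction your case analysis yields exactly the stated basis and the dimension $\frac{p+1}{2}$.
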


\begin{proof} If $p=2$, the algebra $\m_0^\lambda(2)$ is abelian so
  that $d^1=d^2=0$.

  If $p\ge 3$, the proof of Theorem~\ref{1-coho} and the remark
  following the proof show that we have $\dim(\im d^1)=p-2$ and
  $\{e^{12},\dots, e^{1p-1}\}$ is a basis for this image. If $p=3$,
  $d^2=0$ so that the ordinary cohomology $H^2(\m_0^\lambda(3))$ is
  2-dimensional and has a basis consisting of the classes of the
  cocyles $e^{13}$ and $\phi_5=e^{23}$.

  If $p > 3$, a basis for $\ker d^2$ is
  \begin{align*}
    B(Z^2) = \{e^{1,2}, e^{1,3}, ..., e^{1,p}, \phi_5, \phi_7, \dots , \phi_{p+2}\}
  \end{align*}
  where
  $\phi_k=e^{2, k-2}-e^{3, k-3}+\cdots +(-1)^{\lfloor
    \frac{k}{2}\rfloor} e^{\lfloor \frac{k}{2}\rfloor, k-\lfloor
    \frac{k}{2}\rfloor}$.  To see this, note that in order to have
  $d^2=0$, by using formula (5), it is clear that any cocycle element
  has to include either the basis element $e^{1,k}$, and in this case
  this is a cocycle element, or it has to have one and only one
  element of type $e^{2,k}$ in the combination, and all those are
  listed above. The linear independence of the listed cocycle elements
  are clear.
\end{proof}

\paragraph{Example.} If $p=7$, then a basis for $\ker d^2$ is
\begin{align*}
  B(Z^2)=  \{e^{1,2}, e^{1,3}, e^{1,4}, e^{1,5}, e^{1,6},
  e^{1,7}, e^{2,3}, e^{2,5}-e^{3,4}, e^{2,7}-e^{3,6}+e^{4,5}\}.
\end{align*}
The cohomology $H^2(\m_0 ^\lambda (7))$ has a basis consisting of the
classes of
\[\{e^{1,7}, e^{2,3}, e^{2,5}-e^{3,4}, e^{2,7}-e^{3,6}+e^{4,5}\}.\]

\subsection{Restricted Cohomology with ${\bf \lambda}=0$}
If $\lambda=0$, then (\ref{induced2}) shows that $\ind^2=0$ so that
every ordinary 2-cocycle $\phi\in C^2(\m_0^0(p))$ gives rise to a
restricted 2-cocycle $(\phi,\tilde \phi)\in
C^2_*(\m_0^0(p))$. Therefore we can construct a basis for
$H^2_*(\m_0^0(p))$ from a basis for $H^2(\m_0^0(p))$ and the
restricted 2-cocycles $(0,\overline e_i)$, $1\le i\le p$. We summarize
this in the following theorem. As before, the case $p=2$ is treated
separately.

\begin{theorem}\label{zerolambda} Let $\lambda=0$.
  If $p=2$, then
  \[H^2_*(\m_0^0(2))\cong C^2_*(\m_0^0(2))=\spn(\{(0,\overline
    e^1),(0,\overline e^2), (e^{1,2},\tilde e^{1,2})\})\] is
  3-dimensional. If $p\ge 3$, then
  \[\dim(H^2_*(\m_0^0(p)))=\frac{3p+1}{2}\] and the cohomology
  classes of
  \[\{(0,\overline e^1),\dots, (0,\overline e^p),( e^{1,p},\tilde
    e^{1,p}),(\phi_5,\tilde \phi_5),(\phi_7,\tilde \phi_7),\dots,
    (\phi_{p+2},\tilde \phi_{p+2})\}\] form a basis.
\end{theorem}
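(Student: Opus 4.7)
The plan is to exploit the vanishing of the induced maps $\ind^1$ and $\ind^2$ when $\lambda=0$ to reduce the computation of $H^2_*$ to the ordinary cohomology together with a $p$-dimensional Frobenius contribution.

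For $p=2$, the algebra $\m_0^0(2)$ is abelian so $d^1=d^2=0$, and formulas (\ref{induced1})--(\ref{induced2}) with $\lambda=0$ give $\ind^1=\ind^2=0$. Hence $d^1_*=d^2_*=0$ and $H^2_*(\m_0^0(2))=C^2_*(\m_0^0(2))$ with the three listed generators; the claim is immediate.

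For $p\ge 3$, I would first observe that $\lambda=0$ forces $\ind^1(\psi)=0$ for every $\psi\in C^1_*$ and $\ind^2(\phi,\omega)=0$ for every $(\phi,\omega)\in C^2_*$. Consequently $d^2_*(\phi,\omega)=(d^2(\phi),0)$, and so $\ker d^2_*$ has the basis $\{(\beta,\tilde\beta):\beta\in B(Z^2)\}\cup\{(0,\overline e^k):1\le k\le p\}$, where $B(Z^2)$ is the basis of $\ker d^2$ from the proof of Theorem~\ref{ordinary2}. In parallel, $d^1_*(\psi)=(d^1(\psi),0)$, so (\ref{1-coboundary}) gives $\im d^1_*=\spn\{(e^{1,k},0):2\le k\le p-1\}$ of dimension $p-2$.

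The key computation is to show $\tilde e^{1,k}=0$ for $2\le k\le p-1$, so that the basis elements $(e^{1,k},\tilde e^{1,k})$ coincide with the coboundaries $d^1_*(e^{k+1})$. A short induction on (\ref{bracket}) shows that any $(p-1)$-fold bracket $[g_1,\dots,g_{p-1}]$ lies in $\F e_p$, so the correction term in the $*$-property (\ref{starprop}) for $\phi=e^{1,k}$ pairs $e^{1,k}$ against an element of $e_p\wedge\m_0^0(p)$; this vanishes exactly when $k<p$. Therefore the zero map has the $*$-property with respect to $e^{1,k}$ and vanishes on the basis, and uniqueness of the tilde lift forces $\tilde e^{1,k}=0$ for $k<p$. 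Passing to the quotient $\ker d^2_*/\im d^1_*$ eliminates precisely the $p-2$ cocycles $(e^{1,k},\tilde e^{1,k})$ with $2\le k\le p-1$, leaving the $(p-1)/2+1+p=(3p+1)/2$ classes of the claimed basis.

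The principal obstacle is the vanishing dichotomy $\tilde e^{1,k}=0$ for $k<p$ against the nontriviality of $\tilde e^{1,p}$; this is what promotes $(e^{1,p},\tilde e^{1,p})$ to a genuine cohomology class while collapsing the other $(e^{1,k},\tilde e^{1,k})$ to coboundaries. Once this is established, the rest of the argument is bookkeeping built on Theorem~\ref{ordinary2} and the $p$ extra Frobenius cocycles $(0,\overline e^k)$.
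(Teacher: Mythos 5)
Your proposal is correct and follows essentially the same route as the paper: use $\ind^1=\ind^2=0$ for $\lambda=0$ to identify $\ker d^2_*$ as the span of the lifted ordinary cocycles together with the $(0,\overline e^k)$, and then quotient by the $(p-2)$-dimensional image of $d^1_*$. The one point you treat more carefully than the paper is the identification of $d^1_*(e^{k+1})$ with the basis element $(e^{1,k},\tilde e^{1,k})$: you prove directly that $\tilde e^{1,k}=0$ for $k<p$ (via the observation that $(p-1)$-fold brackets land in $\F e_p$), whereas the paper simply redefines $\tilde e^{1,k}$ to be $\ind^1(e^{k+1})=0$, appealing to the remark that the second component of a restricted $2$-cochain does not affect its coboundary; both justifications are valid.
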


\begin{proof} If $p=2$, the algebra $\m_0^0(2)$ is abelian, and
  $\ind^1=\ind^2=0$ so that $d^1_*=d^2_*=0$.

  If $p\ge 3$, the proof of Theorem~\ref{ordinary2} shows that
  \[\{e^{1,2},\dots , e^{1,p}, \phi_5,\phi_7,\dots, \phi_{p+2}\}\] is
  a basis for the kernel of $d^2$. Since $\lambda=0$, (\ref{induced2})
  implies $\ind^2=0$ so that
  \[\{(e^{1,2},\tilde e^{1,2}),\dots , (e^{1,p},\tilde e^{1,p}),
    (\phi_5,\tilde \phi_5),(\phi_7,\tilde \phi_7),\dots,
    (\phi_{p+2},\tilde\phi_{p+2})\}\] is a linearly independent subset
  of $\ker d^2_*$.  Moreover, for $1\le k\le p$, the maps
  $(0,\overline e^k)$ are also in the kernel of $d^2_*$ and the set
  \[B(Z^2_*)=\{(0, \overline e^1),\dots,(0, \overline e^p),
    (e^{1,2},\tilde e^{1,2}),\dots , (e^{1,p},\tilde e^{1,p}),
    (\phi_5,\tilde \phi_5),\dots, (\phi_{p+2},\tilde\phi_{p+2})\}\] is
  a basis $B(Z^2_*)$ for $\ker d^2_*$. We can redefine
  $\tilde e^{1,k-1}=\ind^1(e^{k})$ for $3\le k\le p$ without affecting
  the coboundary (see the remark at the end of Section 3) so that
  $d^1_*(e^k)=(e^{1,k-1},\tilde e^{1,k-1})$, and the set
  \[\{d^1_*(e^k)\ |\ 3\le k\le p\}\subseteq B(Z^2_*)\] forms a basis
  for the image $d^1_*(C^1(\m_0^0(p)))$.  It follows that
  \[\dim(\ker d^2_*)=p+\frac{3(p-1)}{2}=\frac{5p-3}{2},\]
  and $\dim(\im d^1_*)=p-2$ which completes the proof.
\end{proof}


\begin{remark} Another approach for determining the dimensions in
  Theorem~\ref{zerolambda} for $p\ge 2$ uses the six-term exact
  sequence in \cite{Ho}:
  \begin{diagram}[LaTeXeqno]
    \label{sixterm}
    0 &\rTo &H^1_*(\g,M)&\rTo &H^1(\g,M)&\rTo&\Hom_{\rm Fr}(\g,M^\g) & \rTo \\
    & \rTo & H^2_*(\g,M)&\rTo &H^2(\g,M)&\rTo&\Hom_{\rm
      Fr}(\g,H^1(\g,M)) &
  \end{diagram}
  If $\g=\m_0 ^\lambda (p)$ and $M=\F$, the map
  $\Delta: H^2(\g,M)\to\Hom_{\rm Fr}(\g,H^1(\g,M))$ in (\ref{sixterm})
  is given by
  \[\Delta_\phi(g)\cdot h = \phi(g,h^{[p]})=\ind^2(\phi,\omega)(g,h)\]
  where $\phi\in C^2(\g)$ and $g,h\in\g$ (see \cite{Viv}).  If
  $p\ge 3$, the map
  $H^1_*(\m_0 ^\lambda(p)) \to H^1(\m_0^\lambda (p))$ is an
  isomorphism so that the sequence (\ref{sixterm}) decouples and the
  sequence {\footnotesize
    \begin{diagram}[LaTeXeqno]
      \label{decoupled}
      0&\rTo&\Hom_{\rm Fr}(\m_0^\lambda,\F) & \rTo &
      H^2_*(\m_0^\lambda)&\rTo
      &H^2(\m_0^\lambda)&\rTo^\Delta&\Hom_{\rm
        Fr}(\m_0^\lambda,H^1(\m_0^\lambda))
    \end{diagram}} is exact. If $\lambda=0$, $\Delta=0$ so that the
  sequence (\ref{decoupled}) reduces to the short exact sequence
  \begin{diagram}[LaTeXeqno]
    \label{ses}
    0&\rTo&\Hom_{\rm Fr}(\m_0^\lambda,\F) & \rTo &
    H^2_*(\m_0^\lambda)&\rTo &H^2(\m_0^\lambda)&\rTo&0.
  \end{diagram}
\end{remark}
\subsection{Restricted Cohomology with ${\bf \lambda}\ne 0$}

If $\phi=\sum\sigma_{ij}e^{i,j}$ and
$(\phi,\omega)\in C^2_*(\m_0^\lambda)$, then (\ref{induced2}) shows
that
\[\ind^2(\phi,\omega)(e_j,e_i)=\lambda_i\sigma_{jp}.\]
Therefore, if $\lambda\ne 0$, then
$d^2_*(\phi,\omega)=(d^2\phi,\ind^2(\phi,\omega))=(0,0)$ if and only
if $d^2\phi=0$ and
$\sigma_{1p}=\sigma_{2p}=\cdots = \sigma_{p-1 p}=0$. This observation,
together with the calculation of the basis $B(Z^2_*)$ of $d^2_*$ in
the proof of Theorem~\ref{zerolambda}, proves the following

\begin{lemma}\label{lemma} Let $\lambda\ne 0$. If $p=2$, then
  $\ker d^2_* = \spn(\{(0,\overline e^1), (0,\overline e^2)\})$. If
  $p\ge 3$ and then a basis for the $\ker d^2_*$ is
  \[B(Z^2_*)-\{(e^{1,p},\tilde e^{1,p}),(\phi_{p+2},\tilde \phi_{p+2})
    \}.\]
\end{lemma}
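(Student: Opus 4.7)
The plan is to exploit the observation stated immediately before the lemma: a pair $(\phi,\omega)$ lies in $\ker d^2_*$ if and only if $d^2\phi=0$ and the coefficients $\sigma_{jp}$ of $\phi$ satisfy $\sigma_{1p}=\sigma_{2p}=\cdots=\sigma_{p-1,p}=0$. This second condition is forced when $\lambda\ne 0$ because by (\ref{induced2}) we have $\ind^2(\phi,\omega)(g,h)=(\sum\beta_i^p\lambda_i)(\sum_{j<p}\alpha_j\sigma_{jp})$, and since some $\lambda_i\ne 0$ one can realize a nonzero first factor while varying the $\alpha_j$. The remark following Theorem~\ref{zerolambda} further tells us that $\ind^2(\phi,\omega)$ depends only on $\phi$, so the $\omega$-component plays no role in the analysis, and in particular all basis elements of the form $(0,\overline e^k)$ automatically survive.

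Given this, I would start from the basis $B(Z^2_*)$ of $\ker d^2_*$ at $\lambda=0$ that was exhibited in Theorem~\ref{zerolambda}, and examine for each basis element whether some coefficient $\sigma_{jp}$ with $j<p$ is nonzero. The elements $(e^{1,j},\tilde e^{1,j})$ with $2\le j\le p-1$ have only $\sigma_{1,j}=1$ nonzero, involving no index $p$, so they survive; on the other hand $(e^{1,p},\tilde e^{1,p})$ has $\sigma_{1,p}=1$, so it is eliminated.

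For the cocycles $\phi_k$ with $k\in\{5,7,\ldots,p+2\}$, the explicit formula shows that the term $e^{j,p}$ appears in $\phi_k$ exactly when $k-j=p$, i.e.\ $k=j+p$, with $2\le j\le\lfloor k/2\rfloor$. Since $k\le p+2$, the only possibility is $k=p+2$ and $j=2$ (valid because $\lfloor(p+2)/2\rfloor\ge 2$ for $p\ge 3$). Hence only $(\phi_{p+2},\tilde\phi_{p+2})$ has a nonzero $\sigma_{2,p}$, and all remaining $\phi_k$ satisfy the required condition. The conditions $\sigma_{1,p}=0$ and $\sigma_{2,p}=0$ (the remaining $\sigma_{j,p}=0$ for $3\le j\le p-1$ hold identically on $B(Z^2_*)$) are each hit by exactly one basis element and are plainly linearly independent, so removing these two elements from $B(Z^2_*)$ produces the claimed basis for $\ker d^2_*$.

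The case $p=2$ is immediate: here $B(Z^2_*)=\{(0,\overline e^1),(0,\overline e^2),(e^{1,2},\tilde e^{1,2})\}$, and the single condition $\sigma_{1,2}=0$ eliminates $(e^{1,2},\tilde e^{1,2})$, leaving $\{(0,\overline e^1),(0,\overline e^2)\}$. The only genuinely nontrivial step, and the main potential obstacle, is the index analysis isolating $\phi_{p+2}$ as the unique $\phi_k$ that can carry a term $e^{j,p}$ with $j<p$; everything else is bookkeeping based on the explicit basis from Theorem~\ref{zerolambda} and formula (\ref{induced2}).
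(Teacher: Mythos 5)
Your proposal is correct and follows essentially the same route as the paper: the paper derives $\ind^2(\phi,\omega)(e_j,e_i)=\lambda_i\sigma_{jp}$ from (\ref{induced2}), concludes that for $\lambda\ne 0$ the kernel of $d^2_*$ is cut out of $\ker d^2$ by the conditions $\sigma_{1p}=\cdots=\sigma_{p-1,p}=0$, and then appeals to the basis $B(Z^2_*)$ from Theorem~\ref{zerolambda}. You merely make explicit the index bookkeeping (that $e^{1,p}$ and, via $k-j=p$ with $k\le p+2$, only $\phi_{p+2}$ carry a nonzero $\sigma_{jp}$), which the paper leaves implicit.
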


As above, we treat the case $p=2$ separately. It is interesting to
note that when $\lambda\ne 0$, the basis for the cohomology
$H^2_*(\m_0 ^\lambda (2))$ depends on which coordinate of $\lambda$ is
non-zero, whereas this is not the case for $p\ge 3$.

\begin{theorem}\label{nonzerolambdapequal1} If $p=2$ and $\lambda=(\lambda_1,\lambda_2)\ne
  0$, then $\dim(H^2_*(\m_0 ^\lambda (2)))=1$.

  If $\lambda_2\ne 0$, then the cohomology class of the cocycle
  $(0,\overline e^1)$ is a basis for $H^2_*(\m_0 ^\lambda (2))$.

  If $\lambda_2=0$, then the cohomology class of the cocycle
  $(0,\overline e^2)$ is a basis for $H^2_*(\m_0 ^\lambda (2))$.
\end{theorem}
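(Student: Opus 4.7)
The plan is to exploit the abelian structure in the $p=2$ case so that the computation reduces to understanding the single nontrivial coboundary map $d^1_*$, and then combine this with the previous lemma.

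First I would record what $\ker d^2_*$ is. Since $\m_0^\lambda(2)$ is abelian, $d^1=d^2=0$, and Lemma~\ref{lemma} already tells us $\ker d^2_*=\spn(\{(0,\overline{e}^1),(0,\overline{e}^2)\})$, so it is two-dimensional. Thus it remains only to compute $\dim(\im d^1_*)$ and identify a complement.

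Next I would compute $d^1_*$ explicitly. Since $d^1=0$, we have $d^1_*(\psi)=(0,\ind^1(\psi))$ for any $\psi=\mu_1e^1+\mu_2e^2$. Using formula (\ref{induced1}) with $p=2$, for $g=\alpha_1e_1+\alpha_2e_2$ we get
\[
\ind^1(\psi)(g)=\mu_2\bigl(\alpha_1^2\lambda_1+\alpha_2^2\lambda_2\bigr)
=\mu_2\lambda_1\,\overline{e}^1(g)+\mu_2\lambda_2\,\overline{e}^2(g).
\]
Hence $d^1_*(e^1)=(0,0)$ and $d^1_*(e^2)=(0,\lambda_1\overline{e}^1+\lambda_2\overline{e}^2)$. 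Because $\lambda\ne 0$, the cochain $\lambda_1\overline{e}^1+\lambda_2\overline{e}^2$ is a nonzero Frobenius homomorphism, so $\dim(\im d^1_*)=1$. Combining with the two-dimensional kernel yields $\dim H^2_*(\m_0^\lambda(2))=1$.

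Finally I would pick out a basis by splitting into the two cases on $\lambda_2$. If $\lambda_2\ne 0$, then the relation $\lambda_1[(0,\overline{e}^1)]+\lambda_2[(0,\overline{e}^2)]=0$ in the quotient lets us solve for $[(0,\overline{e}^2)]$ in terms of $[(0,\overline{e}^1)]$, so $(0,\overline{e}^1)$ represents a basis class. If $\lambda_2=0$, then $\lambda_1\ne 0$ and the image is precisely $\spn(\{(0,\overline{e}^1)\})$, so $[(0,\overline{e}^1)]=0$ while $[(0,\overline{e}^2)]$ is a basis of the quotient. There is essentially no obstacle here; the only mild subtlety is being careful that when $\lambda_2\ne 0$ we are choosing $(0,\overline{e}^1)$ rather than $(0,\overline{e}^2)$ as the surviving class, which is forced by the concrete form of $\im d^1_*$ above.
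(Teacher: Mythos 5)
Your proof is correct and follows essentially the same route as the paper: both compute $\ind^1(e^2)=\lambda_1\overline e^1+\lambda_2\overline e^2$ to get the one-dimensional image of $d^1_*$, identify $\ker d^2_*=\spn(\{(0,\overline e^1),(0,\overline e^2)\})$ (you cite Lemma~\ref{lemma}, the paper rederives it from (\ref{induced2})), and then split on whether $\lambda_2$ vanishes to pick the surviving class. No gaps.
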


\begin{proof}
  If $p=2$, then the ordinary differentials $d^1=d^2=0$, and
  (\ref{induced1}) shows that $\ind^1(e^1)=0$ and
  $\ind^1(e^2)=\lambda_1\overline e^1+\lambda_2\overline e^2.$ It
  follows that
  \[\{(0, \lambda_1\overline e^1+\lambda_2\overline e^2)\}\] is a
  basis for the image $d^1_*(\m_0^\lambda(2))$. Moreover,
  $(\phi,\omega)\in\ker d^2_*$ if and only if $\ind^2(\phi,\omega)=0$
  if and only if
  $(\phi,\omega)\in\spn(\{(0,\overline e^1),(0,\overline e^2)\}$ by
  (\ref{induced2}). If $\lambda_2\ne 0$, then
  \[\ker(d^2_*)=\spn(\{(0,\overline e^1),(0,\overline
    e^2)\})=\spn(\{(0,\overline e^1),(0, \lambda_1\overline
    e^1+\lambda_2\overline e^2)\})\] so that the cohomology class of
  $(0,\overline e^1)$ is a basis for $H^2_*(\m_0 ^\lambda (2))$. If
  $\lambda_2=0$, then the image
  \[d^1_*(\m_0^\lambda(2))=\spn(\{(0,\lambda_1\overline
    e^1)\})=\spn(\{(0,\overline e^1)\})\] so that that the cohomology
  class of $(0,\overline e^2)$ is a basis for
  $H^2_*(\m_0 ^\lambda (2))$.
\end{proof}

\begin{theorem}\label{nonzerolambda}
  If $p\ge 3$ and ${\bf \lambda} \ne 0$, then
  \[\dim(H^2_*(\m_0 ^\lambda (p)))=\frac{3p-3}{2}\] and the
  cohomology classes of
  \[\{(0, \overline e^1),\dots,(0, \overline e^p), (\phi_5,\tilde
    \phi_5), (\phi_7,\tilde \phi_7)\dots,
    (\phi_{p},\tilde\phi_{p})\}\] form a basis. In particular, the
  cohomology $H^2_*(\m_0 ^\lambda (3))$ has a basis consisting of the
  cohomology classes of the cocycles
  \[\{(0, \overline e^1),(0, \overline e^2), (0, \overline e^3)\}.\]
\end{theorem}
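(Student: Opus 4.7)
The plan is to apply the rank-nullity identity $\dim H^2_*(\m_0^\lambda(p))=\dim\ker d^2_*-\dim\im d^1_*$ to the basis for $\ker d^2_*$ supplied by Lemma~\ref{lemma}, and then to identify which cocycles survive modulo $\im d^1_*$.

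First I would compute $\ker d^1_*$. For $\psi=\sum\mu_k e^k$, the equation $d^1(\psi)=0$ forces $\mu_k=0$ for $k\ge 3$ by (\ref{1-coboundary}), after which (\ref{induced1}) gives $\ind^1(\psi)=\mu_p\sum_j\lambda_j\overline e^j=0$ automatically. Thus $\ker d^1_*=\spn\{e^1,e^2\}$ and $\dim\im d^1_*=p-2$. Combined with $\dim\ker d^2_*=p+(p-2)+(p-3)/2=(5p-7)/2$ from Lemma~\ref{lemma}, this yields $\dim H^2_*=(3p-3)/2$, matching the count $p+(p-3)/2$ of the proposed basis.

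To exhibit the basis I would next compute $d^1_*(e^k)$ for $3\le k\le p$ explicitly. By (\ref{induced1}), $\ind^1(e^k)=0$ for $k<p$ while $\ind^1(e^p)=\sum_j\lambda_j\overline e^j$. The critical intermediate step is to observe that $\tilde e^{1,k-1}=0$ whenever $k-1<p$: for such a $\phi=e^{1,k-1}$, every $(p-1)$-fold bracket in $\m_0^\lambda(p)$ lies in $\F e_p$, and $e^{1,k-1}(e_p\wedge g)=0$ for every $g\in\m_0^\lambda(p)$, so the $*$-property sum in (\ref{starprop}) vanishes identically. The $*$-property then reduces to ordinary additivity, already implied by the Frobenius condition, so the unique such map vanishing on the basis is zero. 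Consequently $d^1_*(e^k)=(e^{1,k-1},\tilde e^{1,k-1})$ for $3\le k\le p-1$, while
\[
d^1_*(e^p)=(e^{1,p-1},\tilde e^{1,p-1})+\bigl(0,\textstyle\sum_j\lambda_j\overline e^j\bigr).
\]

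Passing to the quotient $\ker d^2_*/\im d^1_*$, the first $p-3$ of these relations kill $(e^{1,2},\tilde e^{1,2}),\dots,(e^{1,p-2},\tilde e^{1,p-2})$, and the last yields $(e^{1,p-1},\tilde e^{1,p-1})\equiv -\sum_j\lambda_j(0,\overline e^j)$, already a combination of the $(0,\overline e^j)$'s. Therefore the classes of $\{(0,\overline e^j):1\le j\le p\}\cup\{(\phi_k,\tilde\phi_k):k=5,7,\dots,p\}$ span $H^2_*$, and by the dimension count they form a basis. Linear independence is immediate: no $\phi_k$ contains any $e^{1,*}$ summand, so in any relation $\sum a_j(0,\overline e^j)+\sum b_k(\phi_k,\tilde\phi_k)=\sum c_k d^1_*(e^k)$, comparing first components forces all $c_k=0$, then all $b_k=0$, and finally all $a_j=0$. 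For $p=3$ the $(\phi_k,\tilde\phi_k)$-set is empty, yielding the stated three-element basis.

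The main obstacle is the manipulation of the Frobenius-type second components: establishing $\tilde e^{1,k-1}=0$ for $k-1<p$ directly from the $*$-property, and cleanly isolating the correction $(0,\sum_j\lambda_j\overline e^j)$ appearing in $d^1_*(e^p)$. This correction is the single point at which the parameter $\lambda$ enters the computation, and it is responsible for folding $(e^{1,p-1},\tilde e^{1,p-1})$ into the span of the $(0,\overline e^j)$'s in cohomology.
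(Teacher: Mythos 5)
Your proposal is correct and follows essentially the same route as the paper, which simply deduces the theorem from Lemma~\ref{lemma} together with the image computation $\dim(\im d^1_*)=p-2$ already established for Theorem~\ref{zerolambda}. You supply explicitly the one detail the paper's ``follows immediately'' leaves implicit --- that $\tilde e^{1,k-1}=0$ for $k-1<p$ and that $d^1_*(e^p)=(e^{1,p-1},\sum_j\lambda_j\overline e^j)$, so the class of $(e^{1,p-1},\tilde e^{1,p-1})$ is absorbed into the span of the $(0,\overline e^j)$ --- and your dimension counts and independence check are all accurate.
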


\begin{proof} This follows immediately from Lemma~\ref{lemma}.
\end{proof}

\section{One-dimensional Central Extensions}

It is well known that one-dimensional central extensions of an
ordinary Lie algebra $\g$ are parameterized by the cohomology group
$H^2(\g)$ \cite{FuchsBook}. Likewise, restricted one-dimensional
central extensions of an ordinary Lie algebra $\g$ are parameterized
by the restricted cohomology group $H^2_*(\g)$ \cite{EvansFuchs2}. In
the case that $\g$ is a restricted simple Lie algebra, the authors in
\cite{EvFi} use the exact sequence (\ref{sixterm}) to show that
$H^2_*(\g)=H^2(\g)\oplus \Hom_{\rm Fr}(\g,\F)$ and the cohomology
classes of the cocyles $(0,\overline e^k)$, $1\le k\le \dim\g$ span a
($\dim\g$)-dimensional subspace of $H^*(\g)$. Moreover, if $E_k$
denotes the one-dimensional restricted central extension of $\g$
determined by the cohomology class of the cocycle $(0,\overline e^k)$,
then $E_k=\g\oplus \F c$ as a $\F$-vector space. For all
$1\le i,j\le \dim\g$,
\begin{align}\label{onedimext}
  \begin{split}
    [x_i,x_j] & =[x_i,x_j]_\g;\\
    [x_i, c] & = 0;\\
    e_{i}^{[p]} & = x_i^{[p]_\g}+ \delta_{k,i} c;\\
    c^{[p]} & = 0,
  \end{split}
\end{align}
where $[\cdot,\cdot]_\g$ and $\cdot^{[p]_\g}$ denote the Lie bracket
and $[p]$-operation in $\g$ respectively, and $\delta$ denotes the
Kronecker delta-function.

If $\lambda=0$, the restricted Lie algebra $\g=\m_0^0(p)$ is not
simple, but the the exact sequence (\ref{ses}) shows that Theorem~3.1
and and Corollary~3.2 in \cite{EvFi} also hold for the algebra
$\m_0^0(p)$, and we have

\begin{theorem} If $p\ge 2$, then
  \[H^2_*(\m_0^0(p))=H^2(\m_0^0(p))\oplus \Hom_{\rm
      Fr}(\m_0^0(p),\F),\] and there is a $p$-dimensional subspace of
  $H^2_*(\m_0^0(p))$ spanned by the cohomology classes of the cocycles
  $(0,\overline e^k)$ such that if $E_k$ denotes the corresponding
  one-dimensional restricted central extension, then
  $E_k=\m_0^0(p)\oplus \F c$ and the bracket and $[p]$-operator are
  given by (\ref{onedimext}).
\end{theorem}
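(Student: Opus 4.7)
The plan is to derive this theorem from the short exact sequence (\ref{ses}) together with the explicit calculations in Theorem~\ref{zerolambda}. For $p\ge 3$, the sequence (\ref{ses}) is established in the remark preceding the theorem. For $p = 2$, the same derivation applies: when $\lambda=0$ we still have $\ind^1=0$, so $H^1_*(\m_0^0(2)) = H^1(\m_0^0(2))$ and the six-term sequence decouples; moreover $\Delta = 0$ because $\ind^2(\phi,\omega) = 0$ by (\ref{induced2}). Alternatively the $p=2$ case is immediate from the explicit 3-dimensional basis recorded in Theorem~\ref{zerolambda}.

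To promote the short exact sequence (\ref{ses}) to a direct sum, I would construct a section $\sigma: H^2(\m_0^0(p)) \to H^2_*(\m_0^0(p))$ of the projection by sending $[\phi]$ to $[(\phi, \tilde\phi)]$, where $\tilde\phi$ is the Frobenius map vanishing on the basis and having the $*$-property with respect to $\phi$. Well-definedness requires two checks: first, $(\phi,\tilde\phi) \in \ker d^2_*$ whenever $\phi\in\ker d^2$, which holds because $\lambda = 0$ forces $\ind^2(\phi,\tilde\phi)=0$; second, that the class of $(\phi,\tilde\phi)$ depends only on $[\phi]$, which uses that any two $*$-companions of the same $\phi$ differ by an element of $\Hom_{\rm Fr}(\m_0^0(p),\F)$, and that the image of $d^1_*$ has trivial second component (again because $\lambda=0$ makes $\ind^1=0$). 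Splitting (\ref{ses}) via $\sigma$ yields
\[
  H^2_*(\m_0^0(p)) \cong \Hom_{\rm Fr}(\m_0^0(p),\F) \oplus H^2(\m_0^0(p)),
\]
and the image of $\Hom_{\rm Fr}(\m_0^0(p),\F)$ in $H^2_*(\m_0^0(p))$ is the $p$-dimensional subspace spanned by the classes of $(0,\overline e^k)$, $1\le k\le p$. Linear independence of these $p$ classes in cohomology follows from the same observation that every coboundary in $d^1_*(C^1_*)$ has the form $(d^1\psi, 0)$.

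For the central extension description, I would then invoke the general formula for the one-dimensional restricted central extension attached to a restricted $2$-cocycle $(\phi,\omega)$: the extension has underlying space $\g\oplus\F c$, bracket $[x,y]_E = [x,y]_\g + \phi(x,y)c$, $[p]$-operator $x^{[p]_E} = x^{[p]_\g}+\omega(x)c$, with $c$ central and $c^{[p]_E} = 0$. Specializing to $(\phi,\omega) = (0,\overline e^k)$ gives $\phi = 0$ (unchanged bracket) and $\omega(e_i) = \overline e^k(e_i) = \delta_{k,i}$, which is precisely (\ref{onedimext}). The main technical point, and the place where care is needed, is the well-definedness of the section $\sigma$; but this reduces to the two observations above, both of which hold only because $\lambda = 0$ kills $\ind^1$ and $\ind^2$.
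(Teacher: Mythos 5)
Your proposal is correct and follows essentially the same route as the paper: the paper also derives the direct sum decomposition from the short exact sequence (\ref{ses}) (obtained from the six-term sequence with $\Delta=0$ when $\lambda=0$), merely delegating the splitting to Theorem~3.1 and Corollary~3.2 of \cite{EvFi}, whereas you spell out the explicit section $[\phi]\mapsto[(\phi,\tilde\phi)]$ and its well-definedness. The identification of the extensions $E_k$ via the standard cocycle-to-extension formula matches the paper's use of (\ref{onedimext}).
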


If $\lambda\ne 0$, then an ordinary cocycle
$\phi\in C^2(\m_0^\lambda (p))$ need not give rise to a restricted
cocycle $(\phi,\omega)\in C^2_*(\m_0^\lambda (p))$. For example, if
$p=7$, then $\phi=e^{1,7}$ is an ordinary cocycle but (\ref{induced2})
shows that \[\ind^2(e^{1,7},\tilde e^{1,7})(e_1,e_j)=\lambda_j\] so
that $\ind^2\ne 0$ if $\lambda_j\ne 0$.

In any case, the sequence (\ref{ses}) shows that
$H^2_*(\m_0^\lambda (p))$ always has a $p$-dimensional subspace
spanned by the cohomology classes of the cocyles $(0,\overline e^k)$,
$1\le k\le p$. Each of the corresponding restricted one-dimensional
central extensions $E_k$ of $\m_0^\lambda (p)$ are trivial when
considered as ordinary one-dimensional central extensions.
\bibliography{references}{} \bibliographystyle{plain}

\end{document}